\documentclass{amsart}


\usepackage{amsthm}
\usepackage{enumerate}
\usepackage{amssymb}
\usepackage[utf8]{inputenc}
\usepackage[all,cmtip,2cell]{xy}

\usepackage[colorlinks  = true,
            citecolor   = blue,
            linkcolor   = blue,
            bookmarks   = true,
            linktocpage = true]{hyperref}



\newcommand{\A}[1]{{#1}}

\newcommand{\Cs}{C*}

\newcommand{\bb}[1]{\mathbb{#1}}
\newcommand{\IN}{\bb N}


\newcommand{\dd}{^{**}}

\newcommand{\SOT}{\text{\normalfont\scshape sot}}

\newcommand{\cc}{\subset\hskip-.5em\subset}
\newcommand{\cse}{\prec\hskip-.5em\prec}
\newcommand{\cue}{\sim_{\text{Cu}}}
\newcommand{\cuse}{\precsim_{\text{Cu}}}
\newcommand{\pze}{\sim_{\text{PZ}}}

\newcommand{\cat}[1]{\text{\normalfont\sffamily #1}}

\newcommand{\Cu}{\cat{Cu}}

\newcommand{\seq}[1]{\left\{{#1}_n\right\}_{n\in\IN}}
\newcommand{\norm}[1]{\left\Vert{#1}\right\Vert}

\newcommand{\eps}{\epsilon}

\title{Open projections and suprema in the Cuntz semigroup}

\author{Joan Bosa}
\address{School of Mathematics and Statistics, University of Glasgow, 15 University Gardens, G12 8QW, Glasgow, UK}
\email{joan.bosa@glasgow.ac.uk}

\author{Gabriele Tornetta}
\address{School of Mathematics and Statistics, University of Glasgow, 15 University Gardens, G12 8QW, Glasgow, UK}
\email{g.tornetta.1@research.gla.ac.uk}

\author{Joachim Zacharias}
\address{School of Mathematics and Statistics, University of Glasgow, 15 University Gardens, G12 8QW, Glasgow, UK}
\email{joachim.zacharias@glasgow.ac.uk}

\thanks{\emph{Supported by:}  EPSRC Grant EP/601419/2}
\subjclass[2010]{Primary 46L10, 46L35; Secondary 06F05, 19K14, 46L30, 46L80}

\theoremstyle{plain}
\newtheorem{lemma}{Lemma}[section]

\newtheorem{theorem}[lemma]{Theorem}
\newtheorem{corollary}[lemma]{Corollary}
\newtheorem{proposition}[lemma]{Proposition}
\newtheorem{definition}[lemma]{Definition}
\newtheorem*{proposition*}{Proposition}
\newtheorem*{theorem*}{Theorem}
\newtheorem*{definition*}{Definition}
\newtheorem*{claim*}{Claim}
\newtheorem*{notation*}{Notation}

\newtheorem{remark}[lemma]{Remark}

\newcommand{\K}{\mathcal{K}}
\newcommand{\M}{\mathcal{M}}

\newcommand{\Proj}{{\operatorname{Proj}}}

\newcommand{\W}{{\rm W}}

\begin{document}

\begin{abstract} We provide a new and concise proof of the existence of suprema in the Cuntz semigroup using the open projection picture of the Cuntz semigroup initiated in \cite{ORT}. Our argument is based on the observation that the supremum of a countable set of open projections in the bidual of a \Cs-algebra $A$ is again open and corresponds to the generated hereditary sub-\Cs-algebra of $A$.
\end{abstract}


\maketitle


\section*{Introduction}

The Cuntz semigroup, first introduced in the late 70's by Cuntz (cf. \cite{Cun}, \cite{Cu82}), has over the years emerged as an important tool in the classification of simple \Cs-algebras. Motivated by the possible lack of projections Cuntz defined the semigroup $\W(A)$ as certain  equivalence classes of positive elements in $M_{\infty}(A)$. More recently, the stabilized Cuntz semigroup $\Cu(A)$ given by classes of positive elements in $A \otimes \K$, has been considered. It has a more abstract category-theoretical description put forward in the remarkable work \cite{CEI}, where it is shown that $\Cu(A)$ can be described as equivalence classes of countably generated Hilbert modules. This description is used to establish the existence of suprema in $\Cu(A)$ and the continuity of the natural functor $\Cu(\_)$ from the category of \Cs-algebras to the  category $\Cu$. This fact has turned out to be very important  and has been exploited in many other works, e.g. \cite{ABP,APT,bpt}. The proof in  \cite{CEI} however appears rather involved. An alternative, but still involved proof which is based on the positive element picture of $\Cu(A)$ can be found in \cite{santiago}. But also this proof seems to us to take the reader away from the underlying algebraic structure that is leading to the construction of a suitable representative for the class of the supremum.

Recently a new approach to the Cuntz semigroup $\Cu(A)$ has been proposed in \cite{ORT}  based on the notion of open projections  and a comparison theory for those projections introduced by Peligrad and Zsid\'o \cite{PZ}. Note that in the stable and  separable case there is a natural correspondence between open projections, hereditary subalgebras, countably generated Hilbert modules and positive elements of a given \Cs-algebra.

In this paper we give a proof of the existence of suprema in the Cuntz semigroup of a separable \Cs-algebra based on the open projection picture of $\Cu(A)$ which appears very natural and transparent.  
It stands in between the module picture and the positive element picture, and  provides a constructive proof for one of the main properties of $\Cu(A)$,  complementing the results in \cite{ORT}.  Along the way we observe that for stable algebras every class in the Cuntz semigroup can be represented by a projection in the multiplier algebra.
Essentially, all we need is the very natural concept of compact subequivalence 
for open projections (cf. Definition \ref{compact-subequivalence}) and the fact that increasing strong limits of open projections are open, a fact already known to Akemann  (cf. \cite[Proposition II.5]{Akemann1969}).  However, we observe that if  a family of hereditary subalgebras is directed by inclusion (equivalently, the family of open projections is increasing with respect to the usual order in the positive cone of a \Cs-algebra), then the hereditary subalgebra associated to the limit projection is simply the inductive limit of the system of hereditary subalgebras, where the connecting maps are the natural inclusions (Lemma \ref{lem:her_norm}).

The paper is organized as follows. In the first section we provide some background and well-known results for the Cuntz semigroup, its different  definitions i.e. the positive element, open projection, Hilbert module and hereditary subalgebra picture. In Section 2 we state and prove our results aimed at the construction of suprema of arbitrary sequences of open projections and their relation with the associated hereditary subalgebras. We finish proving the existence of suprema in the Cuntz semigroup in Section 3.


\section{Open Projections and the Cuntz Semigroup}

In this section we briefly recall the definition of the  Cuntz semigroup based on comparison of  positive elements in a \Cs-algebra as well as  alternative descriptions based on Hilbert modules (\cite{CEI}), hereditary subalgebras and corresponding open projections (\cite{ORT}). 
Throughout, we will make the blanket assumption that all \Cs-algebras are separable.

\begin{definition}[Cuntz comparison of positive elements] Let $a, b$ be two positive elements from a \Cs-algebra $A$. We say that $a$ is Cuntz-subequivalent to $b$, in symbols $a\precsim b$, if there exists a sequence $\seq x\subseteq  A$ such that
  $$\norm{x_n^*bx_n - a}\to 0.$$
Cuntz equivalence arises as the antisymmetrization of the above pre-order relation, i.e. $a\sim b$ if and only if $a\precsim b$ and $b\precsim a$. 
\end{definition}
In the commutative setting, the Cuntz equivalence relation just defined reduces to comparison of the support of positive functions (cf. e.g. \cite[Proposition 2.5]{APT}). Hence, equivalence classes are somehow \emph{parametrized} by some open subset of the topological space $X$.

The (stabilized) Cuntz semigroup of a \Cs-algebra $A$ is defined as the set of equivalence classes
  $$\Cu(A):= (A\otimes\K)^+/\sim$$
equipped with the binary Abelian operation $+$ defined by
  $$[a] + [b] := [a\oplus b],$$
whereas the classical Cuntz semigroup is obtained by replacing $A\otimes\K$ by $M_{\infty}(A)$.
It was shown in \cite{CEI} that $\Cu(A)$ belongs to a richer category than just that of Abelian monoids, also denoted by  $\Cu$, and that the natural functor $\Cu(\_)$ from \Cs-algebras to $\Cu$ is sequentially continuous. In fact, it is shown in \cite{APT14} that this functor is continuous, i.e. the property holds for arbitrary inductive limits.

Every positive element $a \in A$ defines the hereditary subalgebra $A_a = \overline{aAa}$ and the Hilbert module $\overline{aA}$. For the class of algebras we consider, every closed hereditary subalgebra and every closed (right)-ideal in $A \otimes \K$ are of this form. The Cuntz semigroup can therefore also be based on equivalence classes of these objects. 


\subsection{Open projections} 
As it is well-known, open subsets of a compact Hausdorff space $X$ can be characterized by (a restatement of) Urysohn's Lemma.
In \cite{Akemann1969}, Akemann used this property to generalize the notion of open subsets to non-commutative \Cs-algebras by replacing sets with projections, and therefore the non-commutative analogue of the above lemma leads naturally to the following.
\begin{definition}[Open projection] Let $A$ be any \Cs-algebra. A projection $p\in A\dd$ is open if it is the strong limit of an increasing net of positive elements $\{a_\alpha\}_{\alpha\in I}\subseteq  A^+$.
\end{definition}

Equivalently, a projection $p\in A\dd$ is open if it belongs to the strong closure of the hereditary subalgebra $ A_p\subseteq A$ (cf. \cite{Akemann1969}), where
  \begin{equation}\label{eq:hereditary}
     A_p := p A\dd p \cap  A = pAp \cap A.
  \end{equation}
In accordance with \cite{ORT}, the set of all open projections in $ A\dd$ will be denoted by 
$P_o( A\dd)$. These projections are in one-to-one correspondence with hereditary subalgebras. 

Recall that the bidual $A\dd$ of a $C^*$-algebra can be identified with the closure in the strong operator topology of $A$ in its universal representation. The von Neumann algebra generated by $A$ in a specific representation is given by projecting onto the representation space. The multiplier algebra $\M (A)$ of  $A$ obtained as  the strict closure of $A$ acting on itself is smaller, since strict convergence implies strong convergence in every representation. In any faithful representation of $A$ the multiplier algebra also acts faithfully which is not the case for $A\dd$ in general. Moreover, the projections in $\M (A)$ are the strict analogues of open projections in the following sense.

\begin{proposition} The projections in $\M (A)$ are those projections in $A\dd$ which are strict limits of  increasing nets of positive elements.
Thus every projection in $\M (A)$ is open, in particular $\Proj(\M(A\otimes\K)) \subseteq P_o((A\otimes\K)^{\dd})$.
\end{proposition}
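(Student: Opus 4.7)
The plan is to prove the two inclusions in the characterization separately. The direction $(\supseteq)$ is immediate from the definition of $\M(A)$ as the strict closure of $A$ in $A\dd$: any element of $A\dd$ realised as a strict limit of a net in $A$ automatically lies in $\M(A)$.

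For $(\subseteq)$, let $p\in\M(A)$ be a projection. The natural candidate for the approximating net is an approximate unit of the hereditary subalgebra $B := pAp$. Because $p$ is a multiplier, $pA,Ap\subseteq A$, hence $B\subseteq A$; a direct check shows $B$ is a norm-closed hereditary $C^*$-subalgebra of $A$ whose associated open projection in $A\dd$ is $p$ itself. Any approximate unit $\{e_\alpha\}\subseteq B^+$ is then increasing, satisfies $0\le e_\alpha\le p$ and $pe_\alpha = e_\alpha p = e_\alpha$, and converges strongly in $A\dd$ to $p$ by the standard correspondence between hereditary subalgebras and open projections.

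The main obstacle is upgrading strong convergence to \emph{strict} convergence in $\M(A)$. To this end I set $b := pa\in A$ for an arbitrary $a\in A$ and exploit the key observation
\begin{equation*}
bb^* = paa^*p \in B.
\end{equation*}
Using $e_\alpha p = e_\alpha$ gives $e_\alpha a = e_\alpha b$, and then the $C^*$-identity yields
\begin{equation*}
\norm{e_\alpha a - pa}^2 = \norm{e_\alpha b - b}^2 = \norm{e_\alpha bb^* e_\alpha - e_\alpha bb^* - bb^* e_\alpha + bb^*}.
\end{equation*}
Since $bb^*\in B$, each of the three $e_\alpha$-containing terms on the right converges in norm to $bb^*$ by the approximate unit property, so the whole expression tends to $0$. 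Taking adjoints handles $\norm{ae_\alpha - ap}$ analogously, so $e_\alpha \to p$ strictly.

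Finally, strict convergence in $\M(A)$ implies strong operator convergence in every representation of $A$, in particular in the universal one that realises $A\dd$. Hence $p$ is the strong limit of an increasing net in $A^+$, i.e.\ open, and applying the result to the separable $C^*$-algebra $A\otimes\K$ yields the inclusion $\Proj(\M(A\otimes\K))\subseteq P_o((A\otimes\K)\dd)$.
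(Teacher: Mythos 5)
Your argument is correct and follows essentially the same route as the paper's own (much terser) proof: the forward inclusion is the definition of $\M(A)$ as the strict closure, and the reverse uses that $pAp$ is a hereditary subalgebra of $A$ whose (increasing) approximate unit converges strictly to $p$, with strict convergence implying strong convergence in the universal representation. You have simply written out the standard $C^*$-identity computation that the paper leaves implicit.
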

\begin{proof}
Any projection which is a strict limit of an increasing net in $A$ is in $\M (A)$. On the other hand, if $P \in \M (A)$ is a projection, then $PAP \subseteq A$ is a hereditary subalgebra and any increasing approximate unit of $PAP$ converges strictly to $P$.
\end{proof}

We will see below (Proposition \ref{rep-open-projection-by multiplier}) that if $A$ is stable then every open projection in $A\dd$ is Cuntz equivalent
to one in $\M  (A)$. 

Continuing with the topological analogy, a projection $p\in A\dd$ is said to be \emph{closed} if its complement $1-p\in A\dd$ is an open projection, and so the closure of an open projection can also be defined. To this end, observe that the supremum of an arbitrary set $P$ of open projections in $A\dd$ is still an open projection and, likewise, the infimum of an arbitrary family of closed projections is still a closed projection, by results in \cite{Akemann1969}. Therefore, the closure of an open projection $p\in A\dd$ can be defined as
  $$\overline p := \inf\{q^*q=q\in A\dd\ |\ 1-q\in P_o( A\dd)\ \wedge\ p\leq q\}.$$
Let $ B$ be a sub-\Cs-algebra of $A$. A closed projection $p\in A\dd$ is said to be \emph{compact} in $ B$ if there exists a positive contraction $a\in B^+_1$ such that $pa = p$.  An important relation between open projections is the following
\begin{definition}[Compact containment] Let $p,q\in A\dd$ be two open projections. Then $p$ is said to be compactly contained in $q$ ($p\cc q$ in symbols) if $\overline p$ is compact in $ A_q$.
\end{definition}
A sequence of open projections $\seq p\subseteq P_o( A\dd)$ is said to be \emph{rapidly increasing} if $p_k\cc p_{k+1}$ for any $k\in\IN$. Note that compact containment is quite a restrictive relation for projections, since it requires one projection to be contained in the other. To make it slightly more flexible one can make use of the following definition of an equivalence relation due to Peligrad and Zsid\'o in \cite{PZ}.

\begin{definition}\label{compact-subequivalence}{\rm (PZ-equivalence)} Two open projections $p,q\in P_o( A\dd)$ are said to be PZ-equivalent, $p\pze q$ in symbols, if there exists a partial isometry $v\in A\dd$ such that
  $$p = v^*v,\qquad q = vv^*,$$
and 
  $$v A_p\subseteq A,\qquad v^* A_q\subseteq A.$$
\end{definition}
\begin{remark}
It is clear from this definition that PZ-equivalence is, in general, stronger than Murray-von Neumann equivalence, although there are some cases where the two relations are known to coincide (cf.  \cite{ORT}). 
\end{remark}
Combining compact containment and PZ-subequivalence leads to what we coin compact subequivalence.

\begin{definition}[Compact subequivalence] Two open projections $p,q\in P_o(A\dd)$ are said to be compactly subequivalent, $p\cse q$ in symbols, if there exists $q'\cc q$ such that $p\sim_{\text{PZ}}q'$.
\end{definition}
Observe that the usual compact containment relation $\cc$ is a special instance of compact subequivalence $\cse$.

As presented in \cite{ORT}, $\Cu(A)$ can also be described using open projections. If $p,q$ are open projections in $A\dd$, then $p$ is said to be Cuntz-subequivalent to $q$, in symbols $p\precsim q$, or sometimes also $p\cuse q$, if for every open projection $p'\cc p$ there exists an open projection $q'\cc q$ such that $p'\pze q'$.

As shown by \cite[Theorem 6.1]{ORT} it turns out that Cuntz comparison of positive elements coincides with the Cuntz comparison of the corresponding open support projections, namely
  $$\Cu(A) \cong P_o (( A\otimes \K)\dd)/\cue,$$
as ordered Abelian semigroups. 

The following discussion shows that every class in $\Cu(A)$ can be represented by a projection in $\M (A \otimes \K)$.

For this we need to use the Hilbert module picture for $\Cu(A)$. Recall that every countably generated Hilbert module over $A$ is a submodule of
	$$\ell^2 (A) = \left\{ (a_n) \in A^{\mathbb{N}}\ \Big\vert\ \sum_{n=1}^{\infty} a_n^* a_n  \text{ converges in norm}\right\},$$
and that an open projection $p \in P_o ((A \otimes \K)\dd)$ determines a countably generated Hilbert module over $A$ given by $E_p := p\ell^2(A)  \cap \ell^2(A)$. (Note that $p$ maps $\ell^2(A)$ into $\ell^2(A)\dd$ and $E_p = \{\xi \in \ell^2(A) \mid p\xi =\xi \} \subseteq \ell^2(A)$ which is a closed Hilbert submodule.)  Following \cite{CEI} a Hilbert module $E$ is compactly contained in a Hilbert module $F$, written $E \cc F$, if there exists a positive element $x$ in the compact operators $\K(F)$ on $F$ such that $x \xi =\xi$ for all $\xi \in E$. Moreover $E$ is Cuntz-subequivalent to $F$, written $E \cuse F$, if for every compactly contained Hilbert submodule $E' \cc E$ there exists $F' \cc F$ with $E' \cong F'$. As  mentioned before, $\Cu(A)$ can also be defined as equivalence classes of countably generated Hilbert modules under the equivalence relation $E \cue F$ if $E \cuse F$ and $F \cuse E$. Note that isomorphic Hilbert modules are in particular Cuntz equivalent (cf. \cite[Proposition 4.3]{ORT}).

Similarly, the Cuntz semigroup can be described by classes of hereditary subalgebras of $A \otimes \K$.  The hereditary subalgebra of $A\otimes \K$ corresponding to a countably generated Hilbert module $E$ is given by $\K(E)$. Conversely, given $p \in P_o((A\otimes\K)^{**})$ on checks that $(A \otimes \K)_p$ is isomorphic to $\K (E_p)$. 

By Kasparov's stabilization theorem we know that 
	$$E_p \oplus \ell^2 (A) \cong \ell^2 (A).$$
Let $P$ be the projection onto $E_p$ in the above orthogonal decomposition. Then $P$ is a projection in ${\mathcal M}(A \otimes \K)$ with $(A\otimes \K)_P = \K (E_p)$, thus the original open projection $p$ is Cuntz equivalent to the projection $P$ that belongs to ${\mathcal M}(A \otimes \K)$. Moreover $I-P$ is also open with $(A\otimes \K)_{I-P} \cong  A\otimes \K$. 

\begin{proposition}\label{rep-open-projection-by multiplier} Every class in $\Cu(A)$ has a representative in the set of projections in $\M (A \otimes \K)$, so that $\Cu(A)$ can also be thought of as the set of Cuntz equivalence classes of projections from  $\M(A\otimes\K)$.
\end{proposition}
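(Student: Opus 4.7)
The plan is to organise the discussion just above the statement into a short self-contained argument that travels through the three equivalent pictures of $\Cu(A)$: open projections, countably generated Hilbert modules, and hereditary sub-\Cs-algebras. Given an arbitrary class in $\Cu(A)$, I would first use \cite[Theorem 6.1]{ORT} to represent it by an open projection $p\in P_o((A\otimes\K)\dd)$, and then pass to the associated countably generated Hilbert $A$-module $E_p := p\ell^2(A)\cap\ell^2(A)$, whose compact operators recover the corresponding hereditary sub-\Cs-algebra via $(A\otimes\K)_p\cong\K(E_p)$.

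Next I would apply Kasparov's stabilization theorem to produce an isomorphism $E_p\oplus\ell^2(A)\cong\ell^2(A)$, and let $P$ be the adjointable projection onto the first summand. Under the standard identifications of $\K(\ell^2(A))$ with $A\otimes\K$ and of the adjointable operators on $\ell^2(A)$ with $\M(A\otimes\K)$, $P$ becomes a genuine projection in $\M(A\otimes\K)$, which by the earlier proposition is automatically open in $(A\otimes\K)\dd$.

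It only remains to check that $[P]=[p]$ in $\Cu(A)$. For this I would simply compare the associated hereditary sub-\Cs-algebras: by construction $(A\otimes\K)_P\cong\K(E_p)$, and this coincides with $(A\otimes\K)_p$. Since Cuntz equivalence of open projections is determined by the associated hereditary sub-\Cs-algebras — equivalently, by the associated countably generated Hilbert modules — it follows that $P\cue p$, which yields the first assertion. The second statement is then an immediate reformulation.

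The main obstacle, or rather the key ingredient, is really Kasparov's stabilization theorem, which is what allows an open projection living a priori only in the bidual to be promoted to a projection in the multiplier algebra. Beyond that, the argument uses only the translations between the three pictures of $\Cu(A)$ that have been set up in the preceding paragraphs, together with the standing separability hypothesis on $A$, which guarantees that $E_p$ is countably generated so that Kasparov's theorem applies.
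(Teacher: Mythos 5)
Your argument is correct and is essentially identical to the paper's: the paper also passes from an open projection $p$ to the Hilbert module $E_p$, invokes Kasparov's stabilization theorem to realize the projection $P$ onto $E_p$ in $E_p\oplus\ell^2(A)\cong\ell^2(A)$ as an element of $\M(A\otimes\K)$ with $(A\otimes\K)_P=\K(E_p)$, and concludes $[P]=[p]$ from the fact that isomorphic Hilbert modules are Cuntz equivalent (\cite[Proposition 4.3]{ORT}). The only cosmetic caveat is that your closing phrase should be read in the one direction actually needed — isomorphism of the associated modules implies Cuntz equivalence, not that Cuntz equivalence is fully determined by isomorphism type.
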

Thus 
$$\Cu(A)\cong\Proj(\M(A\otimes\K))/\sim_\Cu$$ 
with the Cuntz subequivalence relation we had in $P_o((A\otimes\K)^{**})$, which a priori involves open projections not necessarily in $\M(A \otimes \K)$. 

By virtue of this last proposition we will often use a capital $P$ for a projection in $\M(A\otimes \K)$, given as orthogonal projection onto $E_p$ in $E_p \oplus \ell^2(A)=\ell^2(A)$ corresponding to the open projection $p \in P_o((A\otimes\K)^{**})$. Corollary \ref{dilate3} shows that this replacement is compatible with PZ-subequivalence.

We finish this section by providing the main result that stems from our new equivalent characterization of $\Cu(A)$, Corollary \ref{dilate3}, which shows the existence of a unitary element in $\M(A\otimes\K)$ that implements the Cuntz subequivalence between two open projections. This is the analogue of the crucial result \cite[Proposition 2.4]{Ror92}  for stable algebras, where the existence of such unitary is shown under the stable rank one assumption. Before that, we need the following slight refinement of Kasparov's stabilization theorem. For a set $S \subseteq \mathbb{N} $ let
$$\ell^2 (S,A) = \left\{ (a_n) \in A^{S}\ \big\vert\ \sum_{n \in S} a_n^* a_n  \text{ converges in norm}\right\}.$$
\begin{lemma}\label{dilate1} Let $E \subseteq F $ be an inclusion of countably generated Hilbert modules over $A$ and $S \subseteq \mathbb{N} $ infinite with infinite complement. Then there exists a unitary $U: \ell^2(\mathbb{N},A) \to \ell^2(\mathbb{N},A) \oplus F$ such that $U_{ | \ell^2(S,A)} $ is a unitary mapping onto $\ell^2(S,A) \oplus E$.
\end{lemma}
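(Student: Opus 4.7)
The plan is to partition the index set $\mathbb{N}$ into the two infinite pieces $S$ and $S^c$ and apply Kasparov's stabilization theorem once on each half. Writing $\ell^2(\mathbb{N},A) = \ell^2(S,A) \oplus \ell^2(S^c,A)$, the target decomposes compatibly once we have an orthogonal splitting $F = E \oplus E'$; this is the step where I need the inclusion $E \subseteq F$ to be complemented. In the context in which the lemma is used, $E$ and $F$ correspond, via Proposition \ref{rep-open-projection-by multiplier}, to projections $P \leq Q$ in $\M(A\otimes\K)$, and then $E'$ may be taken to be the image of $Q-P$.

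With such a splitting at hand, Kasparov's absorption theorem furnishes two unitaries
\[
V_S: \ell^2(S,A) \xrightarrow{\cong} \ell^2(S,A) \oplus E, \qquad V_{S^c}: \ell^2(S^c,A) \xrightarrow{\cong} \ell^2(S^c,A) \oplus E',
\]
using that both $\ell^2(S,A)$ and $\ell^2(S^c,A)$ are isomorphic to $\ell^2(\mathbb{N},A)$ and hence absorb any countably generated Hilbert $A$-module. Setting $U := V_S \oplus V_{S^c}$ and rearranging the summands of the target,
\[
U: \ell^2(\mathbb{N},A) \longrightarrow \bigl(\ell^2(S,A) \oplus E\bigr) \oplus \bigl(\ell^2(S^c,A) \oplus E'\bigr) = \ell^2(\mathbb{N},A) \oplus F
\]
is a unitary by construction, whose restriction to $\ell^2(S,A)$ is precisely $V_S$, mapping onto $\ell^2(S,A) \oplus E$ as required.

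The hard part is really the first step: locating the complement $E'$. Logically, complementedness of $E$ in $F$ is forced by the statement itself, because the image $U(\ell^2(S,A))$ of any unitary $U$ is automatically orthogonally complemented in the target, and equating that image with $\ell^2(S,A) \oplus E$ would then force $E$ to be complemented in $F$. Once such a complement has been produced, the rest of the proof is a straightforward bookkeeping exercise with Kasparov's theorem.
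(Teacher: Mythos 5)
There is a genuine gap, and it sits exactly where you suspected: the existence of an orthogonal splitting $F = E \oplus E'$. An inclusion of countably generated Hilbert modules is in general \emph{not} complemented (the standard example being $C_0((0,1)) \subseteq C([0,1])$ as modules over $C([0,1])$), and the paper explicitly flags this in the remark following its proof: the construction does not, and cannot in general, show that $E$ is complemented in $F$. Your meta-argument that the lemma's conclusion forces complementedness is where the reasoning breaks down. The conclusion only shows that $\ell^2(S,A)\oplus E$ is orthogonally complemented in $\ell^2(\mathbb{N},A)\oplus F$ (its complement being $U(\ell^2(S^c,A))$); to extract from this a complement of $E$ inside $F$ you would need the projection onto $U(\ell^2(S,A))$ to commute with the projection onto the summand $F$, and it does not in general --- this is precisely the non-commutativity the paper points out. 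Compare: $E\oplus\ell^2(A)\cong\ell^2(A)$ holds for every countably generated $E$ by Kasparov, yet this does not make every submodule inclusion complemented. Your fallback of restricting to the situation $P\leq Q$ in $\M(A\otimes\K)$ also does not rescue the argument, since the lemma is invoked (via Corollary \ref{dilate2}) for an arbitrary inclusion, e.g.\ the inclusion $E_{q'}\subseteq E_q$ arising from $q'\cc q$ in Corollary \ref{dilate3}, which need not be complemented.

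The paper avoids the issue by running the Kasparov argument once, globally, with a generating sequence adapted to $S$: choose bounded generators $(\eta_n)_{n\in\mathbb{N}}$ of $F$, each repeated infinitely often, such that the subfamily $(\eta_j)_{j\in S}$ generates $E$ (again with infinite repetition), form $T=\sum_n 2^{-n}\Theta_{\eta_n+2^{-n}e_n,e_n}$, and take the unitary from the polar decomposition $T=U|T|$. The compatibility of $U$ with $\ell^2(S,A)$ then comes from the choice of generators rather than from any orthogonal splitting of $F$. If you want to salvage your two-block strategy, you would have to first prove complementedness, which is false; so the decomposition approach has to be abandoned rather than repaired.
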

\begin{proof} This is an easy modification of the standard proof of Kasparov's stabilization Theorem (e.g. \cite[Theorem 13.6.2]{Bla}) which we briefly indicate for convenience.  By adjoining a unit we may assume that $A$ is unital, so that $\ell^2(\mathbb{N},A)$ has the canonical basis $(e_n)$. Let $(\eta_n) $ be a bounded sequence of generators of $F$ (e.g. a dense sequence in the unit ball) such that every sequence member appears infinitely often, and let $(\eta_j)_{j \in S}$ generating $E$ such that each generator appears infinitely often too. Then the polar decomposition $U |T|$ of $T: \ell^2(\mathbb{N} ,A) \to \ell^2(\mathbb{N} ,A ) \oplus F$ given by $T=\sum_n 2^{-n} \Theta_{\eta_n + 2^{-n} e_n , e_n}$ provides the required unitary $U$. (Here $\Theta_{\xi,\eta}$ denotes the `rank 1 operator' $\Theta_{\xi,\eta}\zeta= \xi \langle \eta, \zeta\rangle$.)
\end{proof}
Note that we cannot show with this construction that $E$ is complemented in $F$ (which would be false in general), since the projections onto $F$ and $\ell^2(S,A)$ do not commute in general.
\begin{corollary}\label{dilate2} Let $E$, $F$ be countably generated Hilbert modules and $v \in  {\mathcal L}_A ( E, F)$ an isometry. Then there exists a unitary $u \in \ {\mathcal L}_A(E \oplus \ell^2(A) , F\oplus \ell^2(A))  = {\mathcal L}_A (\ell^2(A))=\M (A \otimes \K)$ extending $v$.
\end{corollary}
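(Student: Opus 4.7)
Plan: The main point is that, since $v\in\mathcal{L}_A(E,F)$ is adjointable and satisfies $v^*v=1_E$, the operator $p:=vv^*$ is a self-adjoint projection in $\mathcal{L}_A(F)$ whose range is exactly $vE$. In particular $vE$ is automatically complemented in $F$, yielding the orthogonal decomposition $F=vE\oplus F'$ with $F':=(1-p)F$ a direct summand of the countably generated module $F$, hence itself countably generated.

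Kasparov's stabilization theorem---which is essentially Lemma \ref{dilate1} applied to the trivial inclusion $0\subseteq F'$---then provides a unitary $\psi:F'\oplus\ell^2(A)\to\ell^2(A)$. I would then set
$$u:E\oplus\ell^2(A)\longrightarrow vE\oplus F'\oplus\ell^2(A)=F\oplus\ell^2(A),\qquad u(\xi,\eta):=v\xi\oplus\psi^{-1}(\eta).$$
The restriction of $v$ to $E$ is a unitary $E\to vE$ and $\psi^{-1}$ is a unitary $\ell^2(A)\to F'\oplus\ell^2(A)$, and their ranges sit orthogonally inside $F\oplus\ell^2(A)$, so $u$ is a unitary Hilbert-module map. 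By construction $u(\xi,0)=(v\xi,0)$, so $u$ extends $v$ under the canonical inclusions $E\hookrightarrow E\oplus\ell^2(A)$ and $F\hookrightarrow F\oplus\ell^2(A)$. The chain of identifications in the statement, $\mathcal{L}_A(E\oplus\ell^2(A),F\oplus\ell^2(A))=\mathcal{L}_A(\ell^2(A))=\M(A\otimes\K)$, is then just Kasparov stabilization applied separately to $E$ and $F$.

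I do not anticipate a serious obstacle. The only substantive input is the fact that an adjointable Hilbert-module isometry has complemented range, which is immediate from $v^*v=1_E$ forcing $vv^*$ to be an honest projection. Everything else reduces to bookkeeping with Kasparov stabilization, and the corollary is genuinely a quick consequence of the previous lemma.
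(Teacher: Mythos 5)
Your argument is internally consistent if one reads ``$v\in\mathcal L_A(E,F)$ an isometry'' in the strict sense of an adjointable operator with $v^*v=1_E$: then $vv^*$ is indeed a projection, $vE$ is complemented, and the rest is bookkeeping with Kasparov stabilization. But this reading is too narrow for what the corollary is actually used for, and the complementedness claim is exactly the point the paper is at pains to avoid. Immediately after Lemma \ref{dilate1} the authors remark that one \emph{cannot} conclude that $E$ is complemented in $F$, ``which would be false in general.'' The isometries the corollary must handle are inner-product-preserving module maps whose range is a closed, generally non-complemented submodule: in Corollary \ref{dilate3} the map is the PZ-unitary $E_p\to E_{q'}$ followed by the inclusion $E_{q'}\subseteq E_q$, and compact containment $q'\cc q$ does not make $E_{q'}$ complemented in $E_q$ (already for $A=C[0,1]$, $F=A$, $E'=C_0([0,1/2))$ one has $E'\cc F$ but the only projections in $\mathcal L_A(F)=C[0,1]$ are $0$ and $1$). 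For such a $v$ there is no adjoint as a map into $F$, $vv^*$ does not exist, and your very first step --- ``$vE$ is automatically complemented in $F$'' --- fails.

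The paper's proof is designed precisely to circumvent this: after reducing to the case where $v$ is an honest inclusion $E\subseteq F$ (legitimate, since a surjective isometric module map onto $vE$ is unitary), it invokes the refined stabilization Lemma \ref{dilate1}, which simultaneously trivializes $F$ inside $\ell^2(\mathbb N,A)\oplus F$ and $E$ inside $\ell^2(S,A)\oplus E$ without ever producing a complement of $E$ in $F$; the non-commutativity of the two projections is exactly what absorbs the failure of complementedness. So the gap in your proposal is not a computational slip but a missing idea: you need an extension device that works for non-complemented isometric inclusions, and that device is Lemma \ref{dilate1}, not the identity $p=vv^*$.
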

\begin{proof} We may assume that $v$ is an inclusion. By Lemma \ref{dilate1} we can extend an inclusion to the canonical  inclusion $\ell^2(S,A) \hookrightarrow \ell^2(\mathbb{N},A)$, corresponding to the inclusion $S \hookrightarrow \mathbb{N}$. By adding another copy of $\ell^2(\mathbb{N} , A)$ we can extend this inclusion  to the required unitary of $\ell^2(A)$.
\end{proof} 
\begin{corollary}\label{dilate3} Let $p, q \in P_o(( A\otimes \K)\dd)$ with  $p\cse q$, i.e. there exist $q'\cc q$ and a partial isometry $v\in A\dd$ such that $p = v^*v, q' = vv^*,$
and 
$v A_p\subseteq A,v^* A_{q'}\subseteq A$ and let $P$ and $Q'$ be the corresponding projections onto $E_p$ and $E_{q'}$.
Then there exists a unitary $u \in \M (A \otimes \K)$ extending $v$ such that 
$uP u^* = Q' $.  
\end{corollary}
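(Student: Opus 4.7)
The plan is to convert the PZ-equivalence $p\pze q'$, witnessed by $v$, into a Hilbert-module unitary $\tilde v\colon E_p\to E_{q'}$, and then to apply Corollary \ref{dilate2} to dilate $\tilde v$ to a unitary on $\ell^2(A)$ that simultaneously carries $P$ onto $Q'$.

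The first step is to verify that left multiplication by $v$ descends to a well-defined $A$-linear unitary $\tilde v\colon E_p\to E_{q'}$. For $\xi\in E_p\subseteq\ell^2(A)$ one has $p\xi=\xi$, so $q'(v\xi)=vv^*v\xi=vp\xi=v\xi$, which shows that $v\xi$ satisfies the defining equation of $E_{q'}$ inside $\ell^2(A)\dd$. That $v\xi$ actually lies in $\ell^2(A)$ I would argue as follows: by non-degeneracy of $E_p$ as a module over $\K(E_p)\cong A_p$, every $\xi\in E_p$ is a norm limit of finite sums $\sum a_i\eta_i$ with $a_i\in A_p$ and $\eta_i\in E_p$, and the PZ-condition $vA_p\subseteq A\otimes\K$ then forces $v\xi$ to be a norm limit of elements of $(A\otimes\K)\cdot\ell^2(A)\subseteq\ell^2(A)$. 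The relation $v^*v=p$ makes $\tilde v$ isometric, while the symmetric conditions on $v^*$ produce a two-sided inverse $\widetilde{v^*}$, so $\tilde v$ is a Hilbert-module unitary.

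Next I would view $\tilde v$ as an isometry $E_p\hookrightarrow\ell^2(A)$ (by post-composing with the inclusion $E_{q'}\hookrightarrow\ell^2(A)$) and apply Corollary \ref{dilate2}. This yields a unitary
$$u\in\mathcal L_A\bigl(E_p\oplus\ell^2(A),\,E_{q'}\oplus\ell^2(A)\bigr)$$
extending $\tilde v$. Under the Kasparov identifications $E_p\oplus\ell^2(A)\cong\ell^2(A)\cong E_{q'}\oplus\ell^2(A)$ used to define $P$ and $Q'$, this $u$ becomes a unitary element of $\mathcal L_A(\ell^2(A))=\M(A\otimes\K)$. Because $u$ restricts to $\tilde v$ on the summand $E_p$, it agrees with left multiplication by $v$ there; and since $v=vp$ vanishes on $(1-p)\ell^2(A)\dd$, one obtains $up=v$ as elements of $(A\otimes\K)\dd$, which is the intended sense in which $u$ extends $v$. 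Finally, because $P$ and $Q'$ are the orthogonal projections onto $E_p$ and $E_{q'}$ and $u$ sends $E_p$ unitarily onto $E_{q'}$ (and its orthogonal complement onto that of $E_{q'}$), one has $uPu^*=Q'$.

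The main obstacle I anticipate is the first step: the partial isometry $v$ lives \emph{a priori} only in the enveloping von Neumann algebra, and one must use the PZ-conditions $vA_p\subseteq A\otimes\K$ and $v^*A_{q'}\subseteq A\otimes\K$ in combination with the non-degeneracy of the $A_p$-module action on $E_p$ to show that $v$ genuinely induces a unitary between the Hilbert modules $E_p$ and $E_{q'}$. Once that is done, Corollary \ref{dilate2} and the Kasparov identifications behind $P$ and $Q'$ deliver the rest formally; in particular, the compact-containment hypothesis $q'\cc q$ itself plays no role beyond ensuring the existence of $q'$.
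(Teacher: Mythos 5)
Your proof is correct and takes essentially the same route as the paper's: the paper's entire argument is the single sentence that one may regard $v$ as an isometry from the Hilbert module $E_p$ into $E_q$ and then apply Corollary \ref{dilate2}. Your write-up merely supplies the details the paper leaves implicit, in particular the verification (via the PZ-conditions and non-degeneracy of the $\K(E_p)\cong(A\otimes\K)_p$ action) that $v$ genuinely induces a Hilbert-module unitary $E_p\to E_{q'}$.
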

\begin{proof}
We may regard $v$ as an isometry from the Hilbert module $E_p$ into $E_{q}$ and apply  \ref{dilate2}.
\end{proof}
\begin{remark}
With the same notation as in Corollary \ref{dilate3}, if $p\precsim q$, then for all $P'\cc P$ there exists a unitary $u\in\M(A\otimes\K)$ such that $uP'u^*\cc Q$.
\end{remark}

Note that a similar statement applies to sequences of open projections satisfying either $p_1 \cse p_2 \cse p_3 \cse \ldots \,\,$ or $p_1 \precsim p_2 \precsim p_3 \precsim \ldots \,\,$.

\section{Hereditary Subalgebras and Open Projections}

In this section we establish the hereditary sub-\Cs-algebra analogue of the operation of taking suprema of countably many open projections in $P_o(A\dd)$. We start by observing that, given two open projections $p,q\in A^{**}$ such that $p\leq q$ (as positive elements), then $q$ obviously acts as a unit on $p$, and $A_p\subseteq A_q$ (cf. \cite[\S4.5]{ORT}). This property will be extensively used throughout this paper.  The following very natural  Lemma might be well-known to experts. Since we have not been able to find a proof in the literature we provide one.

\begin{lemma}\label{lem:her_norm} Let $\seq p$ be an increasing sequence of open projections in $ A^{**}$. Then
  	$$ A_p = \overline{\bigcup_{k\in\IN} A_{p_k}},$$
	where $p:=\SOT\lim_{n\to\infty}p_n$.
\end{lemma}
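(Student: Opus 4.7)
The inclusion $\overline{\bigcup_k A_{p_k}} \subseteq A_p$ is immediate: each $p_k \leq p$ gives $A_{p_k} = p_k A\dd p_k \cap A \subseteq pA\dd p \cap A = A_p$, and $A_p$ is norm-closed. The work lies in the reverse containment. Since $A_p$ is the closed linear span of its positive cone, it suffices to approximate every $a \in A_p^+$ by elements of $\bigcup_k A_{p_k}$ in norm. I would in fact prove the stronger claim that for each $\epsilon > 0$ the truncation $(a-\epsilon)_+$ already lies in some $A_{p_N}$; letting $\epsilon \to 0$ and using $\|a-(a-\epsilon)_+\|\leq\epsilon$ then gives $a\in\overline{\bigcup_k A_{p_k}}$.

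To place $(a-\epsilon)_+$ inside a single $A_{p_N}$, choose a continuous function $g_\epsilon\colon[0,\infty)\to[0,1]$ that is $1$ on $[\epsilon,\infty)$ and $0$ on $[0,\epsilon/2]$, and set $h:=g_\epsilon(a)$. Since $g_\epsilon(0)=0$ and $a\in A_p$, the element $h$ is a positive contraction lying in $A_p$. Denote by $q_\epsilon$ the open support projection of $(a-\epsilon)_+$ and by $\overline{q_\epsilon}$ its closure. Functional calculus gives $h\cdot(a-\epsilon)_+^{1/n}=(a-\epsilon)_+^{1/n}$ for every $n$, so passing to the SOT limit inside the commutative bidual $C^*(a)\dd$ shows that $h$ acts as the identity on $\overline{q_\epsilon}$, which in $C^*(a)\dd$ is simply the spectral projection $\chi_{[\epsilon,\infty)}(a)$. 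Hence $\overline{q_\epsilon}$ is a compact closed projection in $A\dd$ in the paper's sense, witnessed by $h\in A^+_1$.

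The proof is then concluded by Akemann's covering property for compact projections \cite{Akemann1969}: any compact closed projection dominated by a supremum of open projections is already dominated by a supremum of finitely many of them. Applied to our increasing sequence $\{p_n\}$ with $\sup_n p_n=p\geq\overline{q_\epsilon}$, monotonicity collapses any finite supremum to a single index $N$, so $\overline{q_\epsilon}\leq p_N$. Consequently $p_N(a-\epsilon)_+p_N=(a-\epsilon)_+$, putting $(a-\epsilon)_+\in p_N A\dd p_N\cap A=A_{p_N}$.

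The main technical point I anticipate is verifying that it is the \emph{closure} $\overline{q_\epsilon}$---rather than just the open projection $q_\epsilon$ itself---which is compact; the spectral-picture computation inside $C^*(a)\dd$ is essential here, since it identifies $\overline{q_\epsilon}$ concretely with $\chi_{[\epsilon,\infty)}(a)$ and makes the equality $h\overline{q_\epsilon}=\overline{q_\epsilon}$ transparent. The rest is a direct application of Akemann's covering theorem, in which the monotonicity of $\{p_n\}$ reduces any finite supremum to a single term.
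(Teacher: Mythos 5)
Your reduction to positive elements and the easy inclusion are fine, but the key step fails: it is not true that $(a-\eps)_+$ lands in a single $A_{p_N}$, and the ``finite covering property'' you attribute to Akemann is false in the form you use it, even for increasing sequences of open projections. A counterexample: let $A=\K(\ell^2)$, so that $A\dd=B(\ell^2)$, every projection is open and closed, and the compact projections are exactly the finite-rank ones. Put $\eta_n=\cos(1/n)\,e_1+\sin(1/n)\,e_{n+1}$ and let $p_n$ be the projection onto $\operatorname{span}\{\eta_1,\dots,\eta_n\}$; these are finite-rank projections in $A$ itself, increasing, with $p:=\sup_n p_n$ the projection onto $\overline{\operatorname{span}}\{\eta_n\}\ni e_1=\lim_n\eta_n$. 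The rank-one projection $q=P_{e_1}$ is compact and $q\le p$, but $q\not\le p_n$ for every $n$, since $e_1\notin\operatorname{span}\{\eta_1,\dots,\eta_n\}$ (any linear combination equal to $e_1$ would have to kill the $e_{n+1}$-components, forcing all coefficients to vanish). Running your argument with $a=P_{e_1}\in A_p$ gives $(a-\eps)_+=(1-\eps)P_{e_1}$ and $\overline{q_\eps}=P_{e_1}$, which is dominated by no $p_N$ and lies in no $A_{p_N}$. The lemma nevertheless holds here because $p_nP_{e_1}p_n\to P_{e_1}$ in \emph{norm}: the approximating elements genuinely change with $n$ rather than stabilising. (Note also that the obvious repair, approximating by $p_n(a-\eps)_+p_n$, is delicate in general since $p_nbp_n$ need not lie in $A$ when $p_n\notin\M(A)$; it happens to work above only because the $p_n$ are in $A$.)

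So the approach of trapping a truncation inside one $A_{p_N}$ cannot work, and any correct proof has to produce approximants that move with $n$. The paper does this by going in the opposite direction: it observes that $B:=\overline{\bigcup_k A_{p_k}}$ is itself a hereditary subalgebra, hence by separability of the form $\overline{aAa}$ for a single positive $a$, and then identifies the support projection $q$ of $a$ with $p$ by squeezing, using support projections $q_n$ of elements $a_n\in A_{p_n}$ converging to $a$ ($\sup_n q_n\le p\le q$ together with $q\le q'$ for any open $q'$ dominating all $q_n$). If you want to salvage an element-wise argument you would need a quantitative statement such as $\norm{\overline{q_\eps}(1-p_n)\overline{q_\eps}}\to 0$ and then a way to push $(a-\eps)_+$ into $A_{p_n}$ up to a small norm error, which in effect reproduces the paper's support-projection argument.
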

\begin{proof} Let $B$ denote the inductive limit on the right side which coincides with the union. By construction $B$ is a hereditary subalgebra of $A$, and therefore, there exists a generator $a \in B$ such that $B = \overline{a Aa}$ (recall that we assume $A$ to be separable). It is then enough to show that the support projection $q\in A^{**}$ of $a$ coincides with $p$. Let $\seq a$ be a sequence of positive elements converging to $a$ in norm such that $a_n\in A_{p_n}$ for any $n\in\IN$. Let $q$ be the support projection of $a$ and $q_n$ be the support projection of $a_n$ for any $n\in\IN$. It is clear that $q_n\leq p_n\leq q$ for any $n\in\IN$ from which it follows that
    $$\sup\seq q \leq \SOT\lim_{n\to\infty}p_n\leq q.$$
  Now suppose that $q'$ is an open projection such that $q_n\leq q'$ for any $n\in\IN$. This implies that $a_nq'=q'a_n = a_n$ for any $n\in\IN$ and so
	$aq'=q'a=a$. Therefore, $q\leq q'$, which leads to $q = \sup\seq q$, whence $p = q$.
\end{proof}
A similar result, using the positive element picture, can be found in \cite[Lemma 4.2]{bpt}, where this result is used to prove that the Cuntz semigroup of any \Cs-algebra of stable rank 1 admits suprema. 

\begin{corollary} Let $\seq p$ be an increasing sequence of open projections in $ A^{**}$. Then
		$$\overline{ A_p}^\SOT = \overline{\bigcup_{k\in\IN}\overline{ A_{p_k}}^\SOT}^\SOT,$$
	where $p:=\SOT\lim_{n\to\infty}p_n$.
\end{corollary}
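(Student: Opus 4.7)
The plan is to reduce the corollary to Lemma \ref{lem:her_norm} using two elementary facts about closures: first, that for any subset $S\subseteq A^{**}$ one has $\overline{\overline{S}^{\norm{\cdot}}}^{\SOT} = \overline{S}^{\SOT}$, since norm convergence implies SOT convergence; and second, that for any family $\{S_\alpha\}$ of subsets of $A^{**}$,
$$\overline{\bigcup_\alpha S_\alpha}^{\SOT} = \overline{\bigcup_\alpha \overline{S_\alpha}^{\SOT}}^{\SOT},$$
which follows from $\bigcup_\alpha S_\alpha \subseteq \bigcup_\alpha \overline{S_\alpha}^{\SOT} \subseteq \overline{\bigcup_\alpha S_\alpha}^{\SOT}$ and then taking SOT-closures.

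First I would invoke Lemma \ref{lem:her_norm} to obtain the norm identity $A_p = \overline{\bigcup_{k\in\IN} A_{p_k}}^{\norm{\cdot}}$. Taking SOT-closures on both sides and applying the first observation above yields
$$\overline{A_p}^{\SOT} = \overline{\bigcup_{k\in\IN} A_{p_k}}^{\SOT}.$$
Then I would apply the second observation with $S_k := A_{p_k}$ to rewrite the right-hand side as
$$\overline{\bigcup_{k\in\IN} A_{p_k}}^{\SOT} = \overline{\bigcup_{k\in\IN}\overline{A_{p_k}}^{\SOT}}^{\SOT},$$
which is exactly the desired equality.

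There is no real obstacle here; the argument is purely formal manipulation of closure operations once Lemma \ref{lem:her_norm} is available. The only thing worth emphasizing is that one should not confuse this corollary with a statement about the von Neumann algebras $p_k A^{**} p_k$ (to which the strong closures $\overline{A_{p_k}}^{\SOT}$ are equal, since $p_k$ is open): the content of the corollary is precisely that the directed union of these corners is SOT-dense in the corner $pA^{**}p$ corresponding to the SOT-limit $p$.
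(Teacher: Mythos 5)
Your proof is correct and follows essentially the same route as the paper: both invoke Lemma \ref{lem:her_norm} and then conclude by the same formal sandwich of SOT-closures, using that interposing norm or SOT closures of the $A_{p_k}$ does not change the final SOT-closure of the union. The only difference is cosmetic — you isolate the closure manipulations as two general facts, while the paper writes out the chain of inclusions directly.
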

\begin{proof} From Lemma \ref{lem:her_norm} one has
    $\overline{ A_p}^\SOT = \overline{\bigcup_{n\in\IN} A_{p_n}}^\SOT.$
  Therefore, by using that
    $\bigcup_{n\in\IN}\overline{ A_{p_n}}^\SOT\subseteq\overline{\bigcup_{n\in\IN} A_{p_n}}^\SOT  \text{ and }\hspace{0.5cm}A_p \subseteq \overline{\bigcup_{n\in\IN}\overline{ A_{p_n}}^\SOT}^\SOT,$
    it follows that
    \[ A_p \subseteq \overline{\bigcup_{n\in\IN}\overline{ A_{p_n}}^\SOT}^\SOT\subseteq\overline{\bigcup_{n\in\IN} A_{p_n}}^\SOT=\overline{ A_p}^\SOT.\qedhere\]
\end{proof}  
The result that now follows is an example of an application of Lemma \ref{lem:her_norm}. The construction of the supremum, i.e. the join of an arbitrary family of projections in the bidual $A\dd$ of a \Cs-algebra $A$ can be carried out by relying on the lattice structure on the set of projections in $A\dd$. In the case of an \emph{increasing} sequence of projections, Lemma \ref{lem:her_norm} shows that the hereditary sub-\Cs-algebra associated to the supremum coincides with the inductive limit of the increasing sequence of hereditary sub-\Cs-algebras associated to each projection in the considered subset of $P_o( A\dd)$. For the general case we then have the following
\begin{proposition} Let $\seq p\subseteq P_o( A\dd)$ be an arbitrary sequence of open projections in $ A\dd$, and let $p:=\sup\seq p$. Then
    $$ A_p = \bigvee_{n\in\IN} A_{p_n},$$
  i.e. $ A_p$ coincides with the hereditary sub-\Cs-algebra of $A$ generated by the family of hereditary sub-\Cs-algebras $\{ A_{p_n}\ |\ n\in\IN\}$.
\end{proposition}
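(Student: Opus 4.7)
The plan is to reduce to the increasing case already handled by Lemma \ref{lem:her_norm}. Given $\{p_n\}_{n\in\IN}$, I would set $q_n := p_1 \vee \cdots \vee p_n$, computed in the projection lattice of $A^{**}$. Each $q_n$ is again an open projection, since finite suprema of open projections are open by \cite{Akemann1969}; the sequence $\{q_n\}$ is increasing, with $\sup_n q_n = \sup_n p_n = p$. Applying Lemma \ref{lem:her_norm} to $\{q_n\}$ then gives
\[ A_p = \overline{\bigcup_{n\in\IN} A_{q_n}}. \]

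The next step is to identify $A_{q_n}$ with the hereditary sub-\Cs-algebra generated by $A_{p_1}\cup\cdots\cup A_{p_n}$. The underlying input is that $p\mapsto A_p$ is an order isomorphism between $P_o(A^{**})$ and the poset (under inclusion) of hereditary sub-\Cs-algebras of $A$: one checks that $p\le q$ if and only if $A_p\subseteq A_q$, using $A_p = pA^{**}p\cap A$ together with SOT-approximation of $p$ by an approximate unit of $A_p$. Let $B$ denote the hereditary sub-\Cs-algebra generated by $A_{p_1},\dots,A_{p_n}$, and let $r$ be its corresponding open projection. Then $A_{p_i}\subseteq A_r$ forces $p_i\le r$ for every $i\le n$, hence $q_n\le r$; conversely, $A_{p_i}\subseteq A_{q_n}$ for all $i$ forces $B\subseteq A_{q_n}$, hence $r\le q_n$. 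Thus $A_{q_n} = B$ equals the finite join of $A_{p_1},\dots,A_{p_n}$ in the lattice of hereditary sub-\Cs-algebras of $A$.

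Combining the two steps, each $A_{q_n}$ is contained in $\bigvee_n A_{p_n}$, so $\overline{\bigcup_n A_{q_n}}\subseteq \bigvee_n A_{p_n}$. Conversely, each $A_{p_n}\subseteq A_p$, and since $A_p$ is a hereditary sub-\Cs-algebra of $A$ containing every $A_{p_n}$, the minimality of $\bigvee_n A_{p_n}$ gives $\bigvee_n A_{p_n}\subseteq A_p$. Putting these together with the formula from Lemma \ref{lem:her_norm} yields $A_p = \bigvee_n A_{p_n}$.

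The only genuinely delicate point is the order-preservation of $p\mapsto A_p$ in both directions, which underpins the identification of $A_{q_n}$ with the join $\bigvee_{i\le n}A_{p_i}$ and hence makes the reduction to Lemma \ref{lem:her_norm} work. Everything else is a clean bookkeeping exercise once the increasing case is in place.
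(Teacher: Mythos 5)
Your proof is correct and follows essentially the same route as the paper: both replace $\seq p$ by the increasing sequence of finite joins $q_n = p_1\vee\cdots\vee p_n$, apply Lemma \ref{lem:her_norm} to get $A_p = \overline{\bigcup_n A_{q_n}}$, and then compare $A_{q_n}$ with the finite joins of the $A_{p_i}$. The only difference is that you spell out the order isomorphism $p\mapsto A_p$ to identify $A_{q_n}$ exactly with $\bigvee_{i\le n}A_{p_i}$, whereas the paper only uses the two containments it needs; this is a harmless elaboration, not a different argument.
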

\begin{proof} Consider the new sequence of open projections $\seq q$ defined by
    $q_1 := p_1,\,\,q_{n+1} := q_{n} \vee p_{n+1},\,\,\,\forall n\in\IN.$
  This clearly defines an increasing sequence of open projections, and moreover
    $p:=\sup\seq p = \SOT\lim_{n\to\infty}q_n.$
  Therefore, using Lemma \ref{lem:her_norm}, one has the identification
    $ A_p = \overline{\bigcup_{k\in\IN} A_{q_k}}.$
  
  By definition, $ A_{p_k}$ is clearly contained in $ A_{q_k}$ for any $k\in\IN$, so
    $\bigvee_{k\in\IN} A_{p_k}\subseteq\overline{\bigcup_{k\in\IN} A_{q_k}}.$
  On the other hand, $ A_{q_k}$ is contained in $\bigvee_{n\in\{1,\ldots, k\}} A_{p_n}$, so 
    $\overline{\bigcup_{k\in\IN} A_{q_k}}\subseteq\bigvee_{k\in\IN} A_{p_k},$
  which shows equality.    
\end{proof}


\section{Suprema in the Cuntz semigroup}
In this section we show that the existence of suprema in the stabilised Cuntz semigroup can be proven by just referring to the open projection picture, using the results discussed in the previous sections.

\begin{lemma}\label{lem:main} Let $p$ be the strong limit of an increasing sequence of open projections $p_1\leq p_2\leq\cdots$. Then, for every $q\cc p$, there is an $n\in\IN$ and an open projection $q'\cc p_n$ such that $q\sim_{PZ}q'$.
\end{lemma}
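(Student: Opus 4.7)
The strategy is to use the compact containment $q \cc p$ to extract a positive witness $a \in A_p^+$ with $a\overline{q} = \overline{q}$, approximate it in $A_{p_n}$ via Lemma~\ref{lem:her_norm}, and then construct the partial isometry implementing PZ-equivalence by polar decomposition of $a_n^{1/2} q \in A\dd$. A careful functional-calculus setup will additionally equip the approximations with a companion witness for compact containment at the $p_n$-level.

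Concretely, fix $a_0 \in A_p^+$ with $a_0 \overline{q} = \overline{q}$ and choose continuous cutoffs $f, g: [0,1]\to[0,1]$ with $f(1) = g(1) = 1$ and $g \equiv 1$ on a neighbourhood of $\{f > 0\}$. Set $a := f(a_0)$ and $d := g(a_0)$ in $A_p^+$; then $a\overline{q} = \overline{q}$ and $d \overline{\supp(a)} = \overline{\supp(a)}$, witnessing $\supp(a) \cc p$. By Lemma~\ref{lem:her_norm} select $a_{0,n} \in A_{p_n}^+$ converging in norm to $a_0$ and put $a_n := f(a_{0,n})$, $d_n := g(a_{0,n}) \in A_{p_n}^+$; continuous functional calculus then delivers $a_n \to a$, $d_n \to d$ in norm and $d_n \overline{\supp(a_n)} = \overline{\supp(a_n)}$. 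For $n$ sufficiently large that $\|qa_nq - q\| \leq \|a_n - a\| < 1/4$ (using $aq = q$), the element $qa_nq$ is invertible in the corner $qA\dd q$, and the polar decomposition of $z_n := a_n^{1/2} q \in A\dd$ yields
$$w_n := a_n^{1/2} q (qa_nq)^{-1/2} \in A\dd,$$
a partial isometry with $w_n^* w_n = q$ and $w_n w_n^* =: q'_n$ equal to the support projection of $a_n^{1/2} q a_n^{1/2}$. The projection $q'_n$ is open, since $a_n^{1/2} q a_n^{1/2}$ is the increasing strong limit of $a_n^{1/2} q_\alpha a_n^{1/2} \in A^+$ (for $q_\alpha$ an approximate unit of $A_q$), so $q'_n$ is a supremum of open projections; and $q'_n \cc p_n$ follows from $q'_n \leq \supp(a_n)$ together with $d_n \overline{\supp(a_n)} = \overline{\supp(a_n)}$.

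The remaining point, which I expect to be the chief technical obstacle, is the verification of the PZ-conditions $w_n A_q \subseteq A$ and $w_n^* A_{q'_n} \subseteq A$. For the first, given $x \in A_q$ I would expand $(qa_nq)^{-1/2}$ as a norm-convergent power series in $(qa_nq - q)$ inside the corner $qA\dd q$ and argue inductively that each $(qa_nq)^k x$ lies in $A_q$; the base case $(qa_nq)x = qa_n x \in A_q$ rests on the hereditary-subalgebra observation that for $y \in A$ with $yq = y$ one has $qy \in A_q$ (applied to $y = a_n x$, which satisfies $yq = a_n(xq) = a_n x = y$). Hence $(qa_nq)^{-1/2} x \in A_q$ and $w_n x = a_n^{1/2}\bigl((qa_nq)^{-1/2}x\bigr) \in A\cdot A \subseteq A$. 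The condition on $w_n^*$ is treated in parallel, exploiting that $A_{q'_n}$ admits a strictly positive element formed as a weighted series of the $a_n^{1/2} q_\alpha a_n^{1/2} \in A^+$.
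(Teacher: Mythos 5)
Your overall architecture (a witness $a$ for $q\cc p$, approximation inside $A_{p_n}$ via Lemma~\ref{lem:her_norm}, a polar decomposition producing the partial isometry) has the right shape, but the step you yourself flag as the chief obstacle is where the argument breaks, and it breaks for a structural rather than a technical reason. Your $w_n=a_n^{1/2}q(qa_nq)^{-1/2}$ is the polar part of $z_n=a_n^{1/2}q$, which lies in $A\dd$ but not in $A$; the Peligrad--Zsid\'o theorem guarantees the module conditions $vA_p\subseteq A$, $v^*A_q\subseteq A$ only for polar parts of elements \emph{of $A$}, and your hand verification rests on the claim that $y\in A$ with $yq=y$ forces $qy\in A$. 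That claim is false. Take $A=C_0((0,1],M_2)$ and let $q$ be the (open) support projection of $h\in A^+$ given by $h(t)=\diag(t,0)$ for $t\le \tfrac12$ and $h(t)=\diag(t,t-\tfrac12)$ for $t>\tfrac12$, so that $q(t)=e_{11}$ for $t\le\tfrac12$ and $q(t)=1_{M_2}$ for $t>\tfrac12$. The element $y(t)=t\,e_{21}$ for $t\le\tfrac12$, $y(t)=\tfrac12 e_{21}$ for $t>\tfrac12$, lies in $A$ and satisfies $yq=y$, but $qy$ vanishes on $(0,\tfrac12]$ and equals $\tfrac12 e_{21}\ne 0$ on $(\tfrac12,1]$, hence is discontinuous and not in $A$. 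The same phenomenon defeats the base case $(qa_nq)x=q(a_nx)$ of your induction (and already shows $qa_nq\notin A$ in general), and the closeness $\|qa_nq-q\|<1/4$ does not repair it: $q(a_n-a)q$ is a small but otherwise generic element of $qA\dd q$, which need not meet $A$.

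The paper's proof is engineered precisely to avoid this verification. After arranging $\overline q(a'-\eps)_+=\overline q$ for some $a'\in C^*(a)$ and choosing $a_n\in A_{p_n}$ with $\|a_n-a'\|<\eps$ (this is where Lemma~\ref{lem:her_norm} enters, as in your proposal), it invokes R\o rdam's lemma \cite[Lemma 2.2]{KR} to produce a contraction $d\in A$ with $da_nd^*=(a'-\eps)_+$, and then applies \cite[Theorem 1.4]{PZ} to the element $x=a_n^{1/2}d^*\in A$: this yields $\overline q\le p_{x^*x}\pze p_{xx^*}\le p_n$ with the module conditions granted for free because $x\in A$. The desired $q'\cc p_n$ with $q\pze q'$ is then extracted from the definition of $\precsim_{\Cu}$ applied to $q\cc p_{x^*x}\precsim_{\Cu}p_n$, rather than exhibited explicitly. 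If you want to keep your construction you must replace $z_n=a_n^{1/2}q$ by an element of $A$ whose two support projections sandwich $\overline q$ from above and sit under $p_n$ --- which is exactly what $x=a_n^{1/2}d^*$ achieves. (Your secondary points --- openness of $q'_n$ as the supremum of the supports of the increasing net $a_n^{1/2}q_\alpha a_n^{1/2}$, and $q'_n\cc p_n$ via the cutoff pair $f,g$ --- are fine, but they cannot rescue the missing PZ conditions.)
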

\begin{proof} By the definition of the relation $q\cc p$ there exists a positive element $a$ in the unit ball of $ A_p$ such that $\overline qa=\overline q$, and by the same argument as in \cite{CEI} (cf. \cite[Proposition 4.11]{APT}), one can find $a'\in$ \Cs$(a)$ such that $\overline q(a'-\eps)_+=\overline q$. 

Let $a_n\in A_{p_n}$ be such that $\norm{a_n-a'}<\eps$, which exists by Lemma \ref{lem:her_norm}. By \cite[Lemma 2.2]{KR} there is a contraction $d\in A_p$ such that $da_nd^* = (a'-\eps)_+$, and it follows by \cite[Theorem 1.4]{PZ} that  $$\overline q\leq p_{x^*x}\sim_{PZ} p_{xx^*}\leq p_n,$$ where $x = a_n^{1/2}d^*$.
   
Since $\leq$ and $\sim_{PZ}$ are special instances of $\precsim_\Cu$ and $\sim_\Cu$ respectively, using \cite[Proposition 4.10]{ORT} one also has
  $$q\cc p_{x^*x}\sim_\Cu p_{xx^*}\precsim_\Cu p_n.$$
Therefore there must exist an open projection $q'\cc p_n$ such that $q\sim_{PZ}q'$.
\end{proof}

\begin{proposition}\label{prop:containment} If $p_1\cc p_2\cc\cdots$ is a rapidly increasing sequence of open projections in $P_o( A^{**})$, then $p_1\leq p_2\leq\cdots$ and
    $$\sup[p_n]=[\SOT\lim_{n\to\infty}p_n].$$
\end{proposition}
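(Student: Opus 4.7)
The first claim $p_1 \leq p_2 \leq \cdots$ is immediate: compact containment $p_n \cc p_{n+1}$ requires $\overline{p_n}$ to be compact in $A_{p_{n+1}}$, which in particular forces $p_n \leq p_{n+1}$ in the usual order. Hence the increasing sequence has a strong operator limit $p := \SOT\lim_n p_n \in A^{**}$, and by Akemann's result this $p$ is again an open projection. Since $p_n \leq p$ for every $n$, and $\leq$ is a special case of $\precsim_\Cu$, the class $[p]$ is an upper bound for the sequence $\{[p_n]\}$.

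To establish that $[p]$ is the \emph{least} upper bound, I plan to use the open-projection description of $\precsim_\Cu$ recalled in Section 1: an inequality $r \precsim_\Cu s$ holds precisely when every open projection $r' \cc r$ admits an open projection $s' \cc s$ with $r' \sim_{PZ} s'$. So let $[q]$ be any Cuntz upper bound, i.e.\ $p_n \precsim_\Cu q$ for all $n$, and fix an arbitrary open projection $p' \cc p$. The goal is to produce $q' \cc q$ with $p' \sim_{PZ} q'$.

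Here is where Lemma \ref{lem:main} does the central work: since $p$ is the SOT-limit of the increasing sequence $(p_n)$ and $p' \cc p$, we obtain an index $n \in \IN$ and an open projection $q'' \cc p_n$ with $p' \sim_{PZ} q''$. Now apply the assumed inequality $p_n \precsim_\Cu q$ to the compactly contained subprojection $q'' \cc p_n$: this yields an open projection $q' \cc q$ such that $q'' \sim_{PZ} q'$. Composing the two PZ-equivalences (which is transitive, as the product of the corresponding partial isometries in $A^{**}$ again satisfies the range/domain conditions of Definition \ref{compact-subequivalence}) gives $p' \sim_{PZ} q'$, as required.

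The argument is fundamentally straightforward once Lemma \ref{lem:main} is in hand, so I do not expect a serious obstacle; the only point that requires a brief check is the transitivity of $\sim_{PZ}$ through the intermediate projection $q''$, which reduces to verifying the two absorption conditions $vw\, A_{p'} \subseteq A$ and $(vw)^*A_{q'} \subseteq A$ for the composed partial isometry $vw$. This follows because $w A_{p'} \subseteq A_{q''} \cdot A \subseteq A_{q''}$ (using $q''$ acts as a unit on $A_{q''}$) and then $v A_{q''} \subseteq A$, and symmetrically on the other side.
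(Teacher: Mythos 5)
Your argument is correct and is essentially the paper's own proof: both reduce the least-upper-bound claim to Lemma \ref{lem:main}, then invoke the open-projection formulation of $p_n\precsim_{\text{Cu}}q$ to transport the resulting $q''\cc p_n$ into some $q'\cc q$, and conclude by transitivity of $\sim_{PZ}$. The only blemish is your justification of that transitivity: the inclusion $A_{q''}\cdot A\subseteq A_{q''}$ is false (a hereditary subalgebra is not a one-sided ideal), but the fact itself is standard from \cite{PZ} and \cite{ORT} --- for $x=wa$ one has $xx^*\in A_{q''}$, hence $vx=\lim_n v(xx^*)^{1/n}x\in A$ since $v(xx^*)^{1/n}\in A$ --- and the paper uses it without comment as well.
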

\begin{proof} Let $p$ be the strong limit of the $p_n$s and suppose that $[q]$ is such that $[p_n]\leq[q]$ for any $n\in\IN$. By Lemma \ref{lem:main}, for every $p'\cc p$ there is an $n\in\IN$ and an open projection $q'$ such that $p'\sim_{PZ}q'\cc p_n$. But, since $[p_n]\leq[q]$, there exists a $q''\cc q$ such that $q''\sim_{\text{PZ}}q'$. Therefore, $[p]\leq[q]$. Since $[q]$ is arbitrary, it follows that $[p]=\sup[p_n]$.
\end{proof}

To prove that every increasing sequence, in the Cuntz sense, has a supremum in the open projection picture one of course needs a more general result. If one na\"ively tries to tackle this problem inside $A$ directly, one runs into the following problem. Let $\seq p$ by any sequence of open projections in $P_o( A\dd)$ with the property that $p_n\cse p_{n+1}$ for every $n\in\IN$. By assumption there are open projections $\seq q$ such that $q_k\cc p_{k+1}$ and $p_k\sim_{\text{PZ}}q_k$. These determine an inductive sequence $(A_{p_k},\phi_k)_{k\in\IN}$ of hereditary subalgebras of $A$, where the connecting maps are given by the adjoint action of partial isometries $\seq v$ satisfying $p_k = v_k^*v_k$, $q_k = v_kv_k^*$ and $v A_{p_k}\subseteq A$, $v^* A_{q_k}\subseteq A$, i.e. $\phi_k(a) = v_k^*av_k$ for any $a\in A_{p_k}$. Denoting by $\tilde A$ the inductive limit of such a sequence, one gets maps $\seq{\rho}$ that make the following diagram
  $$\xymatrix{%
     A_{p_k}\ar[r]^{\phi_k}\ar[dr]_{\rho_k}&  A_{p_{k+1}}\ar[d]^{\rho_{k+1}}\\
    & \tilde{ A}%
  }$$
commutative. But unless $\tilde A\subseteq A$, nothing more can be said about this sequence to conclude the existence of the supremum of $\seq p$. Indeed, using \cite[Example 1]{RT}, one may show that  $\tilde{ A}$ is not always a subalgebra of $A$. To see this, let $p,q$ be the corresponding open projections associated to the two Cuntz equivalent Hilbert modules, which do not embed one into the other, described in \cite[Example 1]{RT}. Without loss of generality assume that $p$ is the unit of $A^{**}$. Now, choose two rapidly increasing sequences of open projections $(p_i)$ and $(q_i)$ that converge to $p$ and $q$ respectively, and isomorphisms $\phi_i:A_{p_i}\to A_{q_i}$. Composing $\phi_i$ with the embedding $\iota_i:A_{q_i}\to A_{q_{i+1}}$ and $\phi_{i+1}^{-1}$, we get a $\phi_{i+1}^{-1}\circ\iota_i\circ\phi_i:A_{p_i}\to A_{p_{i+1}}$. Taking the inductive limit with these maps, we get an algebra isomorphic to $A_q$ which however does not embed in $A_p=A$, though all $A_{p_i}$ are hereditary subalgebras of $A$.

On the other hand, by working with $A\otimes \K$ instead of $A$, one can extend the above partial isometries into unitaries (Corollary \ref{dilate3}), in order to construct a \emph{tower} rather than a \emph{tunnel}.

\begin{lemma}\label{lem:rapidly} Every sequence $\seq p$ of open projections in $P_o(\A A\otimes \K)^{**}$ with the property that $p_n\cse p_{n+1}$ for every $n\in\IN$ has a supremum in $\Cu(\A A)$.
\end{lemma}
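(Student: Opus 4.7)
The plan is to use Corollary \ref{dilate3} to replace the given sequence $\{p_n\}$ (with $p_n\cse p_{n+1}$) by a rapidly increasing sequence $\{\tilde p_n\}$ of open projections, each Cuntz equivalent to the corresponding $p_n$, and then invoke Proposition \ref{prop:containment} to exhibit the supremum as the class of the SOT-limit. This is exactly the strategy indicated in the text: passing to $A\otimes\K$ removes the obstruction that prevented the partial isometries $v_n$ from extending to unitaries inside $A$ itself, allowing one to build a tower rather than a tunnel of hereditary subalgebras.

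First, for each $n$ I would unwrap $p_n\cse p_{n+1}$ to obtain an open projection $q_n'\cc p_{n+1}$ and a partial isometry $v_n\in(A\otimes\K)^{**}$ with $v_n^*v_n=p_n$, $v_nv_n^*=q_n'$, satisfying the required absorption conditions. Corollary \ref{dilate3} then supplies a unitary $u_n\in\M(A\otimes\K)$ extending $v_n$. The extension property forces $u_np_n=v_np_n=v_n$ as elements of $(A\otimes\K)^{**}$, and hence $u_nA_{p_n}u_n^*=v_nA_{p_n}v_n^*=A_{q_n'}$, so at the level of open projections one obtains $u_np_nu_n^*=q_n'$.

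Next, I would set $W_1:=1$ and $W_{n+1}:=W_nu_n^*$, so each $W_n\in\M(A\otimes\K)$ is a unitary, and define $\tilde p_n:=W_np_nW_n^*$. Conjugation by a multiplier unitary preserves openness and implements a PZ-equivalence (with implementing partial isometry $W_np_n$, whose absorption conditions follow from $W_n\in\M(A\otimes\K)$), so each $\tilde p_n$ is open with $[\tilde p_n]=[p_n]$ in $\Cu(A)$. A direct calculation shows the new sequence is rapidly increasing:
\[
\tilde p_n \;=\; W_nu_n^*q_n'u_nW_n^* \;\cc\; W_nu_n^*p_{n+1}u_nW_n^* \;=\; \tilde p_{n+1},
\]
using $p_n=u_n^*q_n'u_n$ on the left, $q_n'\cc p_{n+1}$ in the middle, and the fact that compact containment is preserved under conjugation by elements of $\M(A\otimes\K)$ (since a witness $a\in(A_{p_{n+1}})_1^+$ for $\overline{q_n'}a=\overline{q_n'}$ is carried to the witness $W_nu_n^*au_nW_n^*\in(A_{\tilde p_{n+1}})_1^+$).

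Finally, Proposition \ref{prop:containment} yields $\sup[\tilde p_n]=[\SOT\!\lim_n\tilde p_n]$ in $\Cu(A)$, and since $\{[p_n]\}$ and $\{[\tilde p_n]\}$ coincide classwise they have the same upper bounds, so this class is also $\sup[p_n]$. The main obstacle will be the verification in the second step that the unitary supplied by Corollary \ref{dilate3} actually implements $u_np_nu_n^*=q_n'$ at the level of open projections in $(A\otimes\K)^{**}$ (rather than only for the associated multiplier projections), together with the routine but delicate check that conjugation by elements of $\M(A\otimes\K)$ preserves openness, $\cc$, and $\sim_{PZ}$; once these are in place the remaining argument is essentially formal bookkeeping.
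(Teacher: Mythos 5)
Your proposal is correct and follows essentially the same route as the paper: extract the partial isometries witnessing $p_n\cse p_{n+1}$, extend them to multiplier unitaries via Corollary \ref{dilate3}, conjugate by the accumulated products of these unitaries to turn the sequence into a rapidly increasing tower of Cuntz-equivalent open projections, and apply Proposition \ref{prop:containment} to the SOT-limit. The only cosmetic difference is that the paper conjugates the associated multiplier projections $P_n$ and concludes via $[p_n]=[P_n]=[\overline{P_n}]$, whereas you conjugate the $p_n$ directly, which is why you must additionally verify that the unitary from Corollary \ref{dilate3} implements the equivalence at the level of the bidual projections --- a point the paper's formulation avoids.
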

\begin{proof} Denote by $q_k$ the element that satisfies $p_{k-1}\sim_{PZ}q_k\cc p_k$ coming from the definition of the relation $p_k\cse p_{k+1}$, and by capital letters (e.g. $P_k,Q_k$) the Cuntz equivalent projections in $\M(A\otimes\K)$. By Corollary \ref{dilate3}, there exist a collection of unitaries $\seq u$ such that  $u_{k-1}P_{k-1} u_{k-1}^*=Q_k$ for all $k\in\IN$. Hence, $P_{k-1}=u_{k-1}^*Q_ku_{k-1}\cc u_{k-1}^*P_ku_{k-1}$ and therefore one has that
	$$P_1\cc u_1^*P_2u_1\cc u_1^*u_2^*P_3u_2u_1\cc u_1^*u_2^*u_3^*P_4u_3u_2u_1\cc \cdots.$$
Denoting by $\overline{P_i}:= (\prod^{n-1}_{i=1}u_i)^*P_n(\prod^{n-1}_{i=1}u_i)$, let $P:=\SOT\lim_{n\to\infty}\overline{P_i}$. 
By Proposition \ref{prop:containment} it follows that $[P]=\sup[\overline{P}_n]$ which implies that $[P]=\sup[p_i]$ since $[p_i]=[P_i]=[\overline{P_i}]$.
\end{proof}
\begin{remark}
The above could also be proven from the hereditary subalgebras point of view. In this case, using the same notation as in the above proof, one has that
	$$A_{P_1}\subseteq u_1^*A_{P_2}u_1\subseteq u_1^*u_2^*A_{P_3}u_2u_1\subseteq\ldots\subseteq(\prod^{n-1}_{i=1}u_i)^*A_{P_n}(\prod^{n-1}_{i=1}u_i)\subseteq\ldots\hspace{0.2cm},$$
where they belong to $A\otimes \K$ since $\seq u$ are unitaries in $\M(A\otimes \K)$ and $A_{P_n}$ are hereditary subalgebras of $A\otimes \K$.

Denoting by $P$ the open projection associated to the hereditary subalgebra
	$$A_P=\overline{\bigcup^\infty_{n=1}(\prod^{n-1}_{i=1}u_i)^*A_{P_n}(\prod^{n-1}_{i=1}u_i)},$$
it follows that $[P]=\sup[P_n]$.
\end{remark}
The above is an intermediate step towards the more general proof of the existence of suprema for arbitrary Cuntz-increasing sequences of open projections in $\A A\otimes \K$.

\begin{theorem}\label{thm:supofcuinc} Every Cuntz-increasing sequence $\seq p$ of projections in $P_o(\A A\otimes \K)^{**}$ admits a supremum in $\Cu(\A A)$.
\end{theorem}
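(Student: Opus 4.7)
The plan is to reduce Theorem \ref{thm:supofcuinc} to Lemma \ref{lem:rapidly} by extracting from the given Cuntz-increasing sequence $\seq p$ an auxiliary sequence $\{\tilde p_m\}_{m\in\IN}$ satisfying $\tilde p_m\cse\tilde p_{m+1}$ whose Cuntz supremum coincides with the supremum of $\{[p_n]\}$. This is the expected density/approximation step in the open projection picture: one replaces each $p_n$ by something that approximates it from below in the $\cc$ sense, while simultaneously upgrading the Cuntz comparisons into compact subequivalences.

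For each $n$, fix a rapidly increasing sequence $p_n^{(1)}\cc p_n^{(2)}\cc\cdots$ with SOT-limit $p_n$, obtained from an approximate identity of the hereditary subalgebra $A_{p_n}$. By Lemma \ref{lem:main} every $q\cc p_n$ is PZ-equivalent to an open projection compactly contained in some $p_n^{(k)}$, so the doubly indexed family $\{p_n^{(k)}\}$ controls all upper bounds of $\{[p_n]\}$. Enumerate the pairs $(n,k)$ as $(n_1,k_1),(n_2,k_2),\ldots$ with $n_m\leq m$ (a standard diagonal enumeration). Starting with $\tilde p_1:=p_1^{(1)}$, build $\tilde p_{m+1}$ inductively so as to secure two invariants simultaneously: $\tilde p_m\cse\tilde p_{m+1}$ and $[p_{n_{m+1}}^{(k_{m+1})}]\leq[\tilde p_{m+1}]$. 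The first invariant follows from $\tilde p_m\cc p_m\precsim p_{m+1}$ via the definition of $\precsim$ and Lemma \ref{lem:main} applied to $p_{m+1}=\sup_j p_{m+1}^{(j)}$, producing an index $j_1$ and an $r'\cc p_{m+1}^{(j_1)}$ with $\tilde p_m\sim_{\text{PZ}}r'$; the second invariant follows in the same way, using $n_{m+1}\leq m+1$ to get $p_{n_{m+1}}^{(k_{m+1})}\cc p_{n_{m+1}}\precsim p_{m+1}$ and producing an index $j_2$ and an $s'\cc p_{m+1}^{(j_2)}$ with $p_{n_{m+1}}^{(k_{m+1})}\sim_{\text{PZ}}s'$. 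Setting $\tilde p_{m+1}:=p_{m+1}^{(\max(j_1,j_2))}$ lands both PZ-witnesses compactly inside $\tilde p_{m+1}$, and $\tilde p_{m+1}\cc p_{m+1}$ is automatic from the rapid-containment structure.

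Lemma \ref{lem:rapidly} then produces a Cuntz supremum $[\tilde p]:=\sup_m[\tilde p_m]$. The inequality $[\tilde p]\leq\sup_n[p_n]$ (in the sense of upper bounds) is immediate from $\tilde p_m\cc p_m$, which gives $[\tilde p_m]\leq[p_m]$. For the reverse, to show $[p_n]\leq[\tilde p]$, fix $q\cc p_n$; Lemma \ref{lem:main} yields $q\sim_{\text{PZ}}q'$ with $q'\cc p_n^{(k)}$ for some $k$. Pick $m$ with $(n_m,k_m)=(n,k)$; by construction there is a partial isometry implementing the PZ-equivalence $p_n^{(k)}\sim_{\text{PZ}}s'$ with $s'\cc\tilde p_m$, and the induced isomorphism $A_{p_n^{(k)}}\cong A_{s'}$ maps $q'$ to an open projection $q''\cc s'\cc\tilde p_m$ with $q'\sim_{\text{PZ}}q''$. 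Finally $\tilde p_m\precsim\tilde p$ (from Lemma \ref{lem:rapidly}) produces $r\cc\tilde p$ with $q''\sim_{\text{PZ}}r$, and transitivity of PZ-equivalence gives $q\sim_{\text{PZ}}r$; since $q\cc p_n$ was arbitrary, $p_n\precsim\tilde p$.

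The main obstacle is maintaining both invariants in tandem through the induction: at step $m+1$ one needs a single index $j$ for $\tilde p_{m+1}=p_{m+1}^{(j)}$ that is simultaneously large enough for the PZ-witness of the previous $\tilde p_m$ and for the newly enumerated $p_{n_{m+1}}^{(k_{m+1})}$ to live compactly inside $\tilde p_{m+1}$. The slack provided by the rapid-containment chain $p_{m+1}^{(j)}\cc p_{m+1}^{(j+1)}$ makes it possible to pick a common $j$, while the diagonal enumeration condition $n_m\leq m$ ensures that every $p_n^{(k)}$ is Cuntz-dominated by some $p_m$ in the tail, which is what allows the $\precsim$-to-$\cse$ promotion via Lemma \ref{lem:main}.
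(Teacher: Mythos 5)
Your proof is correct and follows essentially the same route as the paper: both arguments reduce to Lemma \ref{lem:rapidly} by approximating each $p_n$ from below by a rapidly increasing sequence of support projections and then using Lemma \ref{lem:main} to upgrade the comparisons $p_m\precsim p_{m+1}$ into a $\cse$-increasing sequence whose supremum (furnished by Lemma \ref{lem:rapidly}) is checked to be the supremum of the $[p_n]$. Your explicit diagonal enumeration of all pairs $(n,k)$ is a slightly more careful rendering of the paper's cofinality claim that every $[q_{n,m}]$ is dominated by some term of the constructed diagonal sequence, but the underlying argument is the same.
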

\begin{proof} Without loss of generality we may assume that $\A A$ is a stable \Cs-algebra. By assumptions there are positive contractions $\{a_{n,k}\}_{n,k\in\IN}\subseteq A^+_1$ such that
    $p_n = \SOT\lim_{k\to\infty} a_{n,k}$
  and $a_{n,k}\leq a_{n,k+1}$ for any $k,n\in\IN$.
  
   These elements can be modified to yield rapidly increasing sequences of positive elements by setting
    $$a'_{n,k} := \left(a_{n,k} - \tfrac1k\right)_+.$$
  Denoting by $q_{n,k}$ the support projections associated to these new elements $a'_{n,k}$, one has
  $q_{n,k}\cc q_{n,k+1}$ for any $k,n\in\IN$. Now, starting with e.g. $q_{1,1}$ and applying Lemma \ref{lem:main} to $q_{1,1}\cc q_{1,2}\cc p_1\precsim p_2$, one gets $m_1\in\IN$ and $q_{1,1}\cc p_{2,m_1}$ such that $q_{1,1}\sim_\text{PZ}q'_{1,1}$. By iterating  these steps one can construct a sequence of open projections $q_k:=q_{k,m_{k-1}}$ that satisfies
  	$$q_1\sim_{\text{PZ}}q'_{1,1}\cc q_2\sim_{\text{PZ}}q'_{2,m_1}\cc q_3\cdots,$$
  i.e.
  	$$q_1\cse q_2\cse q_3\cse q_4\cse\cdots.$$
  
Observe that $[q_k]\leq [p_k]$ for any $k\in\IN$, and that for any $n,m\in \IN$ there exists $l\in \IN$ such that $[q_{n,m}]\leq [q_l]$. Therefore,
    $[p_n]\leq\sup_k[q_k]\leq\sup_k[p_n],$
  which implies
    $$\sup_n[p_n]\leq\sup_k[q_k]\leq\sup_n[p_n], \,\,\,\text{ i.e. }\,\,\,\sup_n[p_n] = \sup_k[q_k].$$
  The existence of the supremum follows from Lemma \ref{lem:rapidly}. 
\end{proof}
\emph{En passant} we observe that we have the following realization for suprema in the Cuntz semigroup.
\begin{corollary}\label{cor:supassot} Every element $x\in \Cu(\A A)$ can be written as the Cuntz class of the strict limit of an increasing sequence of projections in $\M(\A A\otimes \K)$.
\end{corollary}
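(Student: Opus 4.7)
The idea is to read off the corollary almost directly from the constructive proof of Theorem~\ref{thm:supofcuinc}, with one extra observation about upgrading SOT convergence to strict convergence. Given $x\in\Cu(A)$, I would first produce a rapidly increasing sequence of open projections $p_1\cc p_2\cc\cdots$ in $P_o((A\otimes\K)^{**})$ with $\sup[p_n]=x$: take any positive representative $a\in(A\otimes\K)^+$ of $x$ and let $p_n$ be the support projection of $(a-1/n)_+$.

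Next, I would redo the argument of Lemma~\ref{lem:rapidly}: applying Corollary~\ref{dilate3} successively yields unitaries $u_n\in\M(A\otimes\K)$ such that the conjugated projections
\[
\overline{P}_n:=(u_1\cdots u_{n-1})^*P_n(u_1\cdots u_{n-1})
\]
form an increasing sequence in $\M(A\otimes\K)$ satisfying $\overline{P}_n\cc\overline{P}_{n+1}$ and $[\overline{P}_n]=[p_n]$. To lift the SOT limit $P:=\SOT\lim\overline{P}_n$ to a strict limit, I would exploit compact containment to extract positive contractions $c_n\in(A\otimes\K)_{\overline{P}_{n+1}}$ with $\overline{P}_n c_n=\overline{P}_n$, so that $\overline{P}_n\leq c_n\leq\overline{P}_{n+1}$. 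The sequence $(c_n)$ is then an increasing approximate unit for the hereditary subalgebra $B:=\overline{\bigcup_n(A\otimes\K)_{\overline{P}_n}}$, whose associated open projection coincides with $P$ by Lemma~\ref{lem:her_norm}. Since approximate units of hereditary subalgebras converge strictly to the associated open projection, $c_n\to P$ strictly in $\M(A\otimes\K)$, and the sandwich $\overline{P}_n\leq c_n\leq\overline{P}_{n+1}$ transfers this strict convergence to $(\overline{P}_n)$. Proposition~\ref{prop:containment} then gives $[P]=\sup[\overline{P}_n]=x$.

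The main obstacle is precisely the final upgrade from SOT to strict convergence: one needs the open projection of $B$ to actually live in $\M(A\otimes\K)$, which is not automatic for an arbitrary hereditary subalgebra. Here this is guaranteed by the specific Kasparov-stabilization construction in Proposition~\ref{rep-open-projection-by multiplier}, since each $\overline{P}_n$ is the orthogonal projection onto a summand $E_{p_n}$ in $\ell^2(A)=E_{p_n}\oplus\ell^2(A)$; combined with the compact-containment sandwich, this forces the sequence $(\overline{P}_n)$ to be strictly Cauchy and its strict limit to be the required projection $P\in\M(A\otimes\K)$ with $[P]=x$.
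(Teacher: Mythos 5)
Your route --- realize $x$ as $\sup[p_n]$ for a rapidly increasing sequence, pass to the tower of multiplier projections $\overline{P}_n$ produced in Lemma \ref{lem:rapidly}, and then upgrade the SOT limit to a strict limit --- is exactly the reading of the paper's unproved ``en passant'' remark, and everything up to the last step is fine. But the last step, which you yourself flag as the main obstacle, is a genuine gap, and your proposed repair does not close it. The statement ``approximate units of hereditary subalgebras converge strictly to the associated open projection'' is false in $\M(A\otimes\K)$: an increasing approximate unit of a hereditary subalgebra $B\subseteq A\otimes\K$ converges to its open projection in the strong topology of $(A\otimes\K)\dd$ (equivalently, strictly in $\M(B)$), but it converges strictly in $\M(A\otimes\K)$ precisely when that open projection already lies in $\M(A\otimes\K)$ --- which is the very thing you are trying to prove. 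Nor does the sandwich $\overline{P}_n\leq c_n\leq\overline{P}_{n+1}$ coming from compact containment force strict convergence. Concretely, in $C=C[0,1]\otimes\K$ choose continuous unit-vector fields $v_n:[0,1]\to\ell^2$ with $v_n(t)=\xi_{n+1}$ for $t\notin(\tfrac1{n+1},\tfrac1n)$ and with $v_n$ rotating through $\xi_1$ inside that interval (here $(\xi_k)$ is the standard basis). The rank-one projections $R_n(t)$ onto $\CC v_n(t)$ are mutually orthogonal at every $t$, so $Q_n:=R_1+\dots+R_n$ is an increasing sequence of projections in $C$ satisfying $Q_n\cc Q_{n+1}$ (witnessed by $Q_n$ itself), yet $\|(Q_{n+1}-Q_n)(1\otimes e_{11})\|=1$ for every $n$, so $(Q_n)$ is not strictly Cauchy and its supremum is not a multiplier projection. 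Since your argument uses nothing about the $\overline{P}_n$ beyond $\overline{P}_n\cc\overline{P}_{n+1}$ and membership in $\M(A\otimes\K)$, it would equally ``prove'' strict convergence in this example; so the final appeal to the ``specific Kasparov-stabilization construction'' is not a proof but a restatement of what is missing.

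The paper itself offers no argument for this corollary, so you are not being compared against a more careful proof; but to actually establish the statement you have two options. The cheap one is to invoke Proposition \ref{rep-open-projection-by multiplier}: the class $x=[P]$ has a representative $P'\in\M(A\otimes\K)$ (the projection onto $E_P$ in $E_P\oplus\ell^2(A)\cong\ell^2(A)$), and the constant sequence $P',P',\dots$ is a (weakly) increasing sequence of multiplier projections converging strictly to $P'$; this verifies the literal statement but has no content beyond Proposition \ref{rep-open-projection-by multiplier}. If, as the phrase ``realization for suprema'' suggests, the intended content is that the tower $(\overline{P}_n)$ from Lemma \ref{lem:rapidly} itself converges strictly, then a genuinely new argument about those particular projections is required --- for instance, showing that the increasing union of the complemented submodules $\overline{P}_n\ell^2(A)$ is again complemented --- and neither your proposal nor the paper supplies one.
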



\section*{Acknowledgements} The authors would like to express their gratitude to Hannes Thiel, Aaron Tikuisis and Stuart White for many invaluable comments and feedback.

\bibliography{refs}{}
\bibliographystyle{plain}

\end{document}